\documentclass[12pt, reqno]{amsart}
\usepackage{mathrsfs}
\usepackage{amsfonts,amsmath,dsfont,amssymb,amsthm,stmaryrd,bbm,color,comment}
\usepackage[hidelinks]{hyperref}
\usepackage{appendix}
\usepackage[pdftex]{graphicx}
\usepackage{bbm}
\usepackage{array}
\newcolumntype{L}{>{\centering\arraybackslash}m{2.5cm}}
\usepackage{multirow}
\usepackage{xcolor}
\usepackage[percent]{overpic}
\usepackage{algorithm}
\usepackage{algpseudocode}
\usepackage{mathtools}

\newtheorem{thm}{Theorem}[section]
\newtheorem{lem}[thm]{Lemma}

\theoremstyle{definition}
\newtheorem{defn}[thm]{Definition}

\newtheorem{conj}[thm]{Conjecture}

\theoremstyle{remark}
\newtheorem{rmk}[thm]{Remark}
\numberwithin{equation}{section}

\newcommand{\supp}{\mathrm{supp}}

\newcommand{\Var}{\mathrm{Var}}

\newcommand{\R}{\mathbb{R}}
\newcommand{\N}{\mathbb{N}}
\newcommand{\Z}{\mathbb{Z}}

\newcommand{\E}{\mathbb{E}}

\def\P{{\mathbb P}}

\begin{document}
\setcounter{page}{1}

\color{black}{

\centerline{}

\centerline{}

\title[R\'enyi entropies of weighted Bernoulli sums]{Multiplicative comparisons of R\'enyi entropies for weighted Bernoulli sums}

\author[Jiange Li]{Jiange Li} 







\begin{abstract}

We establish multiplicative comparisons between Rényi entropies of different orders for weighted sums of independent Bernoulli random variables. In particular, we prove a logarithmic comparison between the zeroth- and infinity-order Rényi entropies, which yields a polynomial improvement over the square-root bound of Jain, Sah, and Sawhney. As an application, this leads to an improved parameterized running time for the randomized bin-packing algorithm of Nederlof, Pawlewicz, Swennenhuis, and W\c{e}grzycki. We also obtain explicit dimension-free, constant-factor comparisons between Rényi entropies of positive orders.
\end{abstract} 

\maketitle

\section{Introduction}

\subsection{Background}

Let $X$ be a discrete random variable with probability mass function  $\{f(x)\}_{x\in A}$. For $\alpha\in (0, 1)\cup (1, \infty)$, the \textit{R\'enyi entropy} $H_\alpha(X)$ of order $\alpha$ is defined by
$$
H_\alpha(X)=\frac{1}{1-\alpha}\log\sum_{x\in A}f(x)^\alpha.
$$
By taking limits, one extends this definition to  $\alpha\in\{0, 1, \infty\}$ as  $H_0(X) =\log |\supp(f)|$, $H_1(X)=-\sum_{x\in A}f(x)\log f(x)$, and $H_\infty(X)=-\log \|f\|_\infty$.
Here, $|\supp(f)|$ denotes the cardinality of the support of $f$, $\|f\|_\infty=\sup_{x\in A}f(x)$, and $H_1(X)$ is the classical Shannon entropy, where the subscript one is usually omitted. 

Jensen's inequality implies that $H_\alpha(X)$ is non-increasing as a function of $\alpha\in [0, \infty]$.
A reverse H\"{o}lder type question is when can this monotonicity be reversed. The norm inequality $\|f\|_\alpha\geq \|f\|_\beta$ for $0\leq \alpha<\beta\leq \infty$ yields
\begin{align}\label{eq:multiplicative-compr}
H_\alpha(X)\leq \frac{1-1/\beta}{1-1/\alpha} \cdot H_\beta(X), \quad 1<\alpha<\beta\leq \infty.
\end{align}
For the range $0 \le \alpha < \beta \le 1$, however, no universal multiplicative constant exists. To see this, consider a Bernoulli random variable $X\sim\text{Bern}(p)$ with $0<p<1$. As $p\to0$, one can check that
$$
H_0(X)=\log 2, \qquad H_\alpha(X)\sim \frac{p^\alpha}{1-\alpha}\quad (0<\alpha<1), \qquad H_1(X)\sim p\log (1/p).
$$

This raises the question: For which classes of distributions does the multiplicative comparison in \eqref{eq:multiplicative-compr} remain valid over the entire range $0 \le \alpha < \beta \le \infty$? In the endpoint case $\alpha=0, \beta=\infty$, such an inequality for a finitely supported random variable would assert that
$$
|\supp(f)|\le \frac{1}{\|f\|_\infty^c}
$$
for some absolute constant $c>1$. This is a manifestation of the anti-concentration property of $X$. An important class of distributions whose anti-concentration properties have been extensively studied is that of weighted sums of independent Bernoulli random variables.  The following conjecture was communicated to us by Mokshay Madiman; an equivalent formulation appears in \cite{JSS21} (see equation (1.2) therein).


\begin{conj} \label{conj:main}
Let $X$ be a random vector uniformly distributed on $\{0, 1\}^n$. For a weight vector $w\in \R^n$, we define $S_w=\sum_{i=1}^nw_iX_i$. Then, for all $w\in\mathbb{R}^n$, it holds that
\begin{equation}\label{eq:conj-JSS}
H_0(S_w)\leq 2 H_\infty(S_w).
\end{equation}
\end{conj}

\begin{rmk}
This problem originates in the work of Nederlof, Pawlewicz, Swennenhuis, and W\c{e}grzycki on fast algorithms for bin packing \cite{NPSW23}.  Jain, Sah,
and Sawhney \cite{JSS21} obtained the following equivalent bound
\begin{equation}\label{eq:JSS21}
H_0(S_w)\le C\sqrt{nH_\infty (S_w)},
\end{equation}
where $C$ is an absolute constant, and this gives a polynomial improvement over an earlier estimate in \cite{NPSW23}. It was observed in \cite{JSS21} that such a conjecture, if true, is sharp. One can consider the weight vector $w=(c_1, \cdots, c_1, c_2, \cdots, c_2, \cdots, c_{n/k}, \cdots, c_{n/k})$, where each $c_i$ appears $k$ times and $c_i$ is sufficiently small compared to $c_{i+1}$ for all $i=1, \cdots, n/k$. Note the block sums are independent binomial random variables $\text{Bin}(k, 1/2)$ and their sums are all distinct. Consequently, we have
$$
H_0(S_w)=\frac{n}{k}\log (k+1), \quad H_\infty(S_w)=\frac{n}{k}\left(
k\log 2-\log{k\choose \lceil k/2\rceil}
\right).
$$
Stirling's formula shows that $H_0(S_w)/H_\infty(S_w)$ tends to 2 as $k\to\infty$. Therefore the proposed constant in inequality \eqref{eq:conj-JSS} cannot be improved.
\end{rmk}

\begin{rmk}
One may also consider additive comparisons of R\'enyi entropies for other classes of distributions.  In this direction, Melbourne and Palafox-Castillo \cite{MPC23} showed that, for integer-valued monotone log-concave random variables and all $0\leq \alpha<\beta\leq\infty$,
$$
H_\alpha(X)-H_\beta(X)\le H_\alpha(Z)-H_\beta(Z),
$$
where $Z$ is a geometric random variable. This is the discrete analogue of Theorem 2.9 of Fradelizi, Madiman, and Wang \cite{FMW16}. (An earlier partial result was obtained by Bobkov and Madiman \cite{BM11}, and an extension was later obtained by Fradelizi, Li, and Madiman \cite{FLM20}).
\end{rmk}

\subsection{Main results}

Our first result gives a dimension-free, constant-factor bound relating R\'enyi entropies of positive orders. 

\begin{thm}\label{thm:non-zero-order}
Let $\alpha>0$. Set $k_\alpha=\lceil \log_2 (1/2\alpha)\rceil$ for $\alpha\in (0, 1/2)$. Under notations of Conjecture \ref{conj:main}, for all $w\in \mathbb{R}^n$, we have
$$
H_\alpha(S_w)\le C_\alpha H_\infty(S_w),
$$
where the constant $C_\alpha$ is given by
$$
C_\alpha=
\begin{cases}
\frac{k_\alpha+2(1-2^{k_\alpha}\alpha)}{1-\alpha},  &\alpha\in(0, 1/2),\\
2, &\alpha\in [1/2, 2),\\
\frac{\alpha}{\alpha-1}, &\alpha\in [2, \infty).
\end{cases}
$$
\end{thm}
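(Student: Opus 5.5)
The case $\alpha\ge 2$ needs no new input. Inequality \eqref{eq:multiplicative-compr} with $\beta=\infty$ gives $H_\alpha(S_w)\le \frac{\alpha}{\alpha-1}H_\infty(S_w)$ for every discrete random variable. At $\alpha=2$ this constant equals $2$.

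For $\alpha\in[1/2,2)$, $H_\alpha$ is non-increasing in $\alpha$, so it suffices to prove the single endpoint inequality $H_{1/2}(S_w)\le 2H_\infty(S_w)$. Writing $f$ for the mass function of $S_w$ and $M=\|f\|_\infty$, this is equivalent to
\[
\Bigl(\sum_x f(x)^{1/2}\Bigr)^2\le \frac1M .
\]
Here the special structure of $S_w$ must enter, since the inequality fails for general distributions. I plan to exploit two facts.

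The first is a tensorization identity. Let $S,S'$ be i.i.d.\ copies of $S_w$. Then $X_i-X_i'$ has the same law as $Y_i+Z_i-1$ with $Y_i,Z_i$ i.i.d.\ Bernoulli$(1/2)$. Hence $S-S'$ has the law of $S_{(w,w)}-\sum_i w_i$, where $S_{(w,w)}$ is the weighted Bernoulli sum in dimension $2n$. The mass function of $S-S'$ is $f*\tilde f$ with $\tilde f(x)=f(-x)$, and it is maximized at $0$, where it equals $\sum_x f(x)^2$. This gives
\[
H_\infty(S_{(w,w)})=H_2(S_w),
\]
and iterating gives analogous identities for $2^k$-fold repetitions of $w$.

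The second is a combinatorial pairing on the cube. Let $A_x=\{\varepsilon\in\{0,1\}^n:\langle w,\varepsilon\rangle=x\}$, so $f(x)=2^{-n}|A_x|$. I would try to prove an injection-type inequality, for instance $|A_x|\,|A_y|\le |A_m|\,|A_{x+y-m}|$ with $m$ a mode, or a Cauchy--Schwarz on the pairs $(\varepsilon,\varepsilon')\mapsto(\varepsilon\wedge\varepsilon',\varepsilon\vee\varepsilon')$. The goal is to bound $\sum_{x,y}\sqrt{f(x)f(y)}$ by $1/M$.

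For $\alpha\in(0,1/2)$ the shape of $C_\alpha$ suggests an induction on $k_\alpha$ through a doubling step. Set $g(\alpha)=(1-\alpha)H_\alpha/H_\infty$. The claimed value satisfies $(1-\alpha)C_\alpha=1+(1-2\alpha)C_{2\alpha}$. So it suffices to prove the key lemma: for $\alpha\in(0,1/2)$,
\[
(1-\alpha)H_\alpha(S_w)\le (1-2\alpha)H_{2\alpha}(S_w)+H_\infty(S_w),
\qquad\text{equivalently}\qquad M\sum_x f(x)^\alpha\le \sum_x f(x)^{2\alpha}.
\]
Applying this $k_\alpha$ times lands at $2^{k_\alpha}\alpha\in[1/2,1)$. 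There the previous case gives $(1-\beta)H_\beta\le 2(1-\beta)H_\infty$, and unwinding the recursion produces exactly $k_\alpha+2(1-2^{k_\alpha}\alpha)$.

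To prove the key lemma I would again use the pairing on $\{0,1\}^n$. I would try to establish a pointwise inequality such as $M f(x)^\alpha\le \sum_y f(y)^{2\alpha}\,\mu_x(y)$ for a suitable stochastic kernel $\mu_x$, built from the decomposition of $S_w$ into its half-sums and the identity above for the doubled weight vector.

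The main obstacle is this combinatorial core: the base inequality $\sum_x\sqrt{f(x)}\le M^{-1/2}$ and its doubling analogue $M\sum_x f^\alpha\le\sum_x f^{2\alpha}$. The surrounding reductions (monotonicity, \eqref{eq:multiplicative-compr}, and the induction on $k_\alpha$) are routine bookkeeping. My first attempt would be Cauchy--Schwarz over pairs of subsets, together with the symmetry $S_w\overset{d}{=}\sum_i w_i-S_w$, aiming for an inequality of the form $f(x)f(y)\le M\,f(x+y-m)$.
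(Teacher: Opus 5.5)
\textbf{There is a genuine gap: the two inequalities that carry the theorem are not proved, and the routes you sketch for them fail.} Your outer reductions are correct. The case $\alpha\ge 2$ follows from \eqref{eq:multiplicative-compr}, monotonicity handles $[1/2,2)$, and the recursion $(1-\alpha)C_\alpha=1+(1-2\alpha)C_{2\alpha}$ checks out. Your key lemma $M\sum_x f(x)^\alpha\le\sum_x f(x)^{2\alpha}$ for $0<\alpha\le 1/2$ is in fact true; at $\alpha=1/2$ it is the base case. But you prove neither the lemma nor the base case, and the specific problems are these:
\begin{itemize}
\item You misstate the base case. $H_{1/2}\le 2H_\infty$ means $\sum_x f(x)^{1/2}\le 1/M$. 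Your version $(\sum_x f(x)^{1/2})^2\le 1/M$ (equivalently $\sum_x\sqrt{f(x)}\le M^{-1/2}$) says $H_{1/2}\le H_\infty$. That forces $f$ to be uniform on its support, and it already fails for $w=(1,1)$: there $(1+1/\sqrt2)^2\approx 2.91>2=1/M$.
\item The pointwise target $f(x)f(y)\le Mf(x+y-m)$ (equivalently $|A_x||A_y|\le|A_m||A_{x+y-m}|$) is false. For $w=(1,1,1,1)$, $m=2$, $x=y=3$ one gets $f(3)^2=16/256>6/256=Mf(4)$. For binomial $f$, log-concavity gives the reverse inequality $f(x)^2\ge f(m)f(2x-m)$.
\item The identity $H_\infty(S_{(w,w)})=H_2(S_w)$ is correct, but it links orders $2$ and $\infty$. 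It gives no handle on orders $\le 1/2$.
\end{itemize}

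\textbf{The missing idea is a coupling that pins one point to the largest fiber.}
\begin{itemize}
\item \emph{Setup.} Let $\tau$ be a mode, let $Y$ be uniform on $B=\{x\in\{0,1\}^n:\langle w,x\rangle=\tau\}$ and independent of $X$, and set $Z=X\oplus Y$. Then $Z$ is uniform and independent of $Y$.
\item \emph{Likelihood ratio.} Put $\phi(t|z)=\P(S_w=t\mid Z=z)/f(t)$. It satisfies $\phi\le 1/M$, since $\P(X=x\mid Z=z)\le 1/|B|$, and $2^{-n}\sum_z\phi(t|z)=1$. Write $f(t|z)^\gamma=f(t)^\gamma\phi\,\phi^{\gamma-1}\ge f(t)^\gamma\phi\,M^{1-\gamma}$ and average over $z$. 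This gives $\sum_t f(t)^\gamma\le M^{\gamma-1}2^{-n}\sum_z\sum_t f(t|z)^\gamma$.
\item \emph{Product structure.} Fix $z$, let $I=\{i:z_i=0\}$, and let $U_z,V_z$ be the independent partial sums of $S_w$ over $I$ and $I^c$, with $c_z=\sum_{i\notin I}w_i$. Then $S_w=c_z-\tau+2\sum_{i\in I}w_iY_i$. So the conditional law of $S_w$ is the image, under the injective map $u\mapsto c_z-\tau+2u$, of the mass function $u\mapsto\P(U_z=u)\P(V_z=\tau-u)/M$.
\item \emph{Doubling.} Because this is a product, Cauchy--Schwarz doubles the order: $\sum_t f(t|z)^\gamma\le M^{-\gamma}\bigl(\sum_u\P(U_z=u)^{2\gamma}\bigr)^{1/2}\bigl(\sum_v\P(V_z=v)^{2\gamma}\bigr)^{1/2}$.
\item \emph{Conclusion.} At $\gamma=1/2$ this is exactly $\sum_t f(t)^{1/2}\le 1/M$. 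For $\gamma<1/2$, the law of $S_w=U_z+V_z$ is a mixture of translates of the law of $U_z$. Concavity of $s\mapsto s^{2\gamma}$ then gives $\sum_u\P(U_z=u)^{2\gamma}\le\sum_t f(t)^{2\gamma}$, and likewise for $V_z$. This yields precisely your key lemma. The paper instead applies the order-$2\gamma$ bound to $U_z,V_z$ directly and uses $H_\infty(U_z)\le H_\infty(S_w)$, reaching the same constant.
\end{itemize}
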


Our second result provides a logarithmic bound between the zeroth- and infinity-order R\'enyi
entropies, which yields a polynomial improvement over inequality \eqref{eq:JSS21}. 

\begin{thm}\label{thm:zero-order}
For $w\in\mathbb{R}^n$, denote $m=|\{i\in [n]: w_i\ne 0\}|$. Under notations of Conjecture \ref{conj:main}, we have
$$
H_0(S_w)
\leq C(w) H_\infty(S_w),
$$
where $C(w)$ is given by
$$
C(w)=\min\left\{\log_2(m+1), 2+\log_2\left(\frac{m\log 2}{H_\infty(S_w)}\right)\right\}.
$$
\end{thm}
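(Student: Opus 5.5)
We may assume that every $w_i$ is nonzero. Coordinates with $w_i=0$ change neither $H_0(S_w)$ nor $H_\infty(S_w)$, so $n=m$. We may also assume every $w_i>0$: replacing $X_i$ by $1-X_i$ preserves the uniform law on $\{0,1\}^m$ and only shifts $S_w$. Write $A=\supp(S_w)$ and $p=\|f\|_\infty$. Then $H_0(S_w)=\log|A|$ and $H_\infty(S_w)=\log(1/p)$.

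Since the minimum defining $C(w)$ has two terms, the plan is to prove two inequalities:
\begin{equation*}
\text{(I)}\quad |A|\le (m+1)^{\log_2(1/p)},\qquad \text{(II)}\quad \log|A|\le \Bigl(2+\log_2\tfrac{m\log 2}{\log(1/p)}\Bigr)\log(1/p).
\end{equation*}
The basic structural fact is that, for positive weights, each level set $L_s=\{x\in\{0,1\}^m: S_w(x)=s\}$ is an antichain. More generally, $S_w$ is strictly increasing along every chain in the Boolean lattice.

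For (I), I would show that one can find an integer $k\le \log_2(1/p)$ and a partition of $[m]$ into $k$ groups such that, within each group, the behaviour is "binomial-like", so that each group sum takes at most $m_j+1$ values. This gives $|A|\le\prod_j(m_j+1)\le(m+1)^k$. The guiding example is the extremal weight vector in the remark following Conjecture \ref{conj:main}: if the weights take $k$ distinct values with multiplicities $m_j$, then $|A|\le \prod_j (m_j+1)\le (m+1)^k$. We need to match this with $p\le 2^{-k}$. I would aim to prove that lower bound on $\log_2(1/p)$ by a compression / symmetric-chain argument. Take a symmetric chain decomposition of the cube. Along each chain, $S_w$ is strictly monotone, and we count how often a chain can hit the heaviest level set. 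This should give $p\le 2^{-k}$, where $k$ measures the number of "independent scales" among the weights. I would formalize $k$ greedily: process the weights in increasing order and open a new group whenever the next weight exceeds the total sum of the current group. Separated groups then contribute independent factors of at most $1/2$ to the maximal atom, giving $p\le 2^{-k}$. Within a group no such separation occurs, and I would try to show that the group sum behaves, as far as the support count is concerned, like a binomial with at most $m_j+1$ effective values.

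This step is the main obstacle. In general a group without separation can have up to $2^{m_j}$ distinct subset sums, not $m_j+1$. The right bookkeeping must therefore trade extra support inside a group against a smaller maximal atom for that group. My first attempt would be an induction on $m$ that removes the largest weight $w_m$. If $w_m>\sum_{i<m}w_i$, then $|A|=2|A'|$ and $p=p'/2$, and the induction closes immediately. Otherwise the two translates $A'$ and $A'+w_m$ overlap, and I would need to show that $p$ drops enough relative to the growth of $|A|$. This would likely use a Littlewood–Offord-type estimate $p\le \binom{m}{\lfloor m/2\rfloor}2^{-m}$ to handle the overlapping case.

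For (II), I would split $[m]$ into blocks $B_1,\dots,B_r$ of nearly equal size $t$ and combine the subadditivity $\log|A|\le\sum_j \log|A_{B_j}|$ with the bound $|A_{B_j}|\le 2^{t}$. On the other side I would use a lower bound for $\log(1/p)$ in terms of the blocks. Choosing $t\approx m\log2/\log(1/p)$ then trades the factor $\log_2(m+1)$ from (I), applied blockwise, for $\log_2(4t)$, which gives the second term in $C(w)$. Taking the smaller of (I) and (II) proves the theorem.
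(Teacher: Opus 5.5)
Your overall reduction (discard zero weights, flip signs, prove the two bounds separately and take the minimum) is fine. However, both bounds have genuine gaps, and in each case the missing ingredient is the one the paper uses.

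For (I), your greedy count $k$ of separated scales cannot certify both $p\le 2^{-k}$ and $|A|\le(m+1)^k$. With the rule as stated, the weights $1,2,3$ form three groups, yet $p=1/4$. If instead you compare each weight with the sum of all previous weights, then generic weights near $1$ form a single group with $|A|=2^m$, as you note yourself. The induction on the largest weight does not fill this gap. In the overlapping case you give no mechanism tying the growth of $|A|$ to the decay of $p$, and the Erd\H{o}s--Littlewood--Offord bound is a global estimate giving only $\log_2(1/p)\ge\frac12\log_2 m-O(1)$.

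The right notion of independent scales is a \emph{maximal dissociated} subset $\{w_j\}_{j\in J}$ of the nonzero weights, i.e.\ one whose $2^{|J|}$ subset sums are distinct. Dissociativity makes $\sum_{j\in J}w_jX_j$ uniform on $2^{|J|}$ values and independent of the other terms, so $p\le 2^{-|J|}$. Maximality forces $w_i=\sum_{j\in J}\varepsilon_{ij}w_j$ with $\varepsilon_{ij}\in\{-1,0,1\}$ for every $i\notin J$. Hence $S_w=\sum_{j\in J}w_jN_j$, where each integer $N_j=X_j+\sum_{i\notin J}\varepsilon_{ij}X_i$ takes at most $m+1$ values, whence $|A|\le(m+1)^{|J|}$. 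No within-group analysis, chains or antichains are needed.

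For (II), subadditivity with $|A_{B_j}|\le 2^t$ alone returns only $\log|A|\le m\log 2$. The lower bound on $\log(1/p)$ ``in terms of the blocks'' that you need goes the wrong way: for independent block sums $p\ge\prod_j p_j$, i.e.\ $\log(1/p)\le\sum_j\log(1/p_j)$, and in general one only has $\log(1/p)\ge\max_j\log(1/p_j)$. Applying (I) blockwise therefore gives $\log|A|\le\log_2(t+1)\sum_j\log(1/p_j)$. But $\sum_j\log(1/p_j)$ can exceed $\log(1/p)$ by an unbounded factor: for $w=(1,\dots,1)$ with blocks of size $t\approx m\log 2/\log(1/p)$, the ratio is of order $\log m$.

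The paper proves (II) by a different mechanism. Since every nonzero atom is at least $2^{-m}$, one has $H_0(S_w)\le(1-\alpha)H_\alpha(S_w)+\alpha m\log 2$. Take $\alpha=H_\infty(S_w)/(m\log 2)$; if $\alpha\ge 1/2$, simply use $H_0\le m\log 2\le 2H_\infty$. Otherwise the dimension-free comparison $(1-\alpha)H_\alpha(S_w)\le(k_\alpha+1)H_\infty(S_w)$ from Theorem~\ref{thm:non-zero-order} gives $H_0\le(k_\alpha+2)H_\infty$, with $k_\alpha\le\log_2(m\log 2/H_\infty(S_w))$.

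That comparison rests on the largest-fiber coupling $Z=X\oplus Y$. Under it, the conditional law of $S_w$ given $Z$ factors through two independent partial sums, so Cauchy--Schwarz doubles the R\'enyi order. The logarithmic factor in $C(w)$ is exactly the number $k_\alpha$ of doubling steps needed to reach $[1/2,1)$, and nothing in your proposal plays that role.
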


\begin{rmk}
For $w\in\mathbb{R}^n$, we write 
\begin{equation}\label{eq:rho-r}
\rho(w)=\max_{t\in\R}\P(S_w=t), \quad
\mathcal{R}(w)=\{\langle x, w\rangle: x\in \{0, 1\}^n\}.
\end{equation}
Here, $\langle\cdot, \cdot\rangle$ denotes the inner product. In the notations of \cite{NPSW23, JSS21}, we have
$$
\rho(w)\ge e^{-\varepsilon n} 
\Longrightarrow |\mathcal{R}(w)|\le e^{\delta(\varepsilon)n},
$$
where
\begin{equation}\label{eq:delta}
\delta(\varepsilon)=O\left(\varepsilon\log\frac1\varepsilon\right).
\end{equation}
This improves the quadratic relation $\delta(\varepsilon)=O(\sqrt{\varepsilon})$ of Jain, Sah, and Sawhney \cite{JSS21}, and is optimal up to a logarithmic factor. Indeed, if $\rho(w)\ge e^{-\varepsilon n}$, then $H_\infty(S_w)\le \varepsilon n$, and Theorem \ref{thm:zero-order} gives
$$
\log |\mathcal{R}(w)|
=H_0(S_w)
\le \left(2+\log_2\left(\frac{n\log 2}{H_\infty(S_w)}\right)\right)H_\infty(S_w)
=O\left(\varepsilon\log\frac1\varepsilon\right)n.
$$
\end{rmk}

\begin{rmk} [Application to bin packing]
Bin packing is a classical NP-complete problem, stated as follows: Given $n$ items with weights $w_1,\dots,w_n$ and $m$ bins with capacities $c_1,\dots,c_m$, is there a way to assign the items to the bins without violating the capacity constraints? Bj\"{o}rklund, Husfeldt, and Koivisto \cite{BHK09} provided an algorithm for solving bin packing in time $\tilde{O}(2^n)$, where $\tilde{O}$ omits polynomial factors in $n$. Recently, Nederlof, Pawlewicz, Swennenhuis, and W\c{e}grzycki \cite{NPSW23} showed that for every $m\in\N$ there is a constant $\sigma_m>0$ such that the instance of bin packing with $m$ bins can be solved by a randomized algorithm in time $\tilde{O}(2^{(1-\sigma(m))n})$ with high probability. Their analysis crucially relies on the dependence of $\delta(\varepsilon)$ on $\varepsilon$, and the value of $\sigma(m)$ satisfies
\[
\sigma(m)\le 2^{-m^9}.
\]
Using the quadratic relation $\delta(\varepsilon)=O(\sqrt{\varepsilon})$, Jain, Sah, and Sawhney \cite{JSS21} improved this bound to $\sigma(m)=\tilde{\Omega}(m^{-12})$, where $\tilde{\Omega}$ hides logarithmic factors in $m$. Following the same black-box argument as Jain, Sah, and Sawhney, we can apply the nearly optimal relation \eqref{eq:delta} to further improve the bound to $\sigma(m)=\tilde{\Omega}(m^{-6})$.
\end{rmk}


\subsection{Comparison with related work} 
Recall $\rho(w)$ and $\mathcal{R}(w)$ defined in \eqref{eq:rho-r}. For each $t\in \mathcal{R}(w)$, we select one $x\in \{0, 1\}^n$ such that $\langle w, x\rangle=t$, and denote by $A$ the collection of such vectors. Clearly, $|A|=|\mathcal{R}(w)|$ and  
$\langle w, x\rangle\neq \langle w, x'\rangle$ for distinct $x, x'\in A$. We select $\tau\in \mathcal{R}(w)$ such that $\rho(w)=\P(S_w=\tau)$ and write $B=\{x\in \{0, 1\}^n: \langle w, x\rangle=\tau\}$. Then $H_0(S_w)=\log |A|$, $H_\infty(S_w)=\log (2^n/|B|)$, and Conjecture \ref{conj:main} asserts that
$$
|A|\le \left(\frac{2^n}{|B|}\right)^2.
$$
Both \cite{NPSW23} and \cite{JSS21} couple $A$ and $B$ via $A+k\cdot B$, where $k\cdot B:=B+\cdots+B$ denotes the $k$-fold Minkowski sum. A simple but crucial observation is the injectivity of the map
$$
A\times k\cdot B\to A+k\cdot B,\quad (a, b)\mapsto a+b,
$$
whcih gives
$$
|A|=\frac{|A+k\cdot B|}{|k\cdot B|}.
$$
\begin{enumerate}
\item Instead of counting all $A+k\cdot B$, \cite{NPSW23} introduces the set of ballanced pairs $P\subset A\times k\cdot B$, where $(a, b)\in P$ means that $b$ is approximately binomial relative to the support of $a$. The upper bound of $|P|$ relies on the construction of a large set of balanced vectors in $k\cdot B$ (i.e., approximately binomial vectors). The lower bound of $|P|$ crucially relies on injectivity of the aforementioned map and that $a+b$ is approximately binomial for $(a, b)\in P$. 
\item The argument of \cite{JSS21} is more probabilistic. Let $\nu_B$ and $\nu$ denote the uniform measures on $B$ and $\{0, 1\}^n$, respectively. The injectivity of the aforementioned map gives
$$
|A|\le \sum_{x\in \{0, 1, \cdots, k+1\}^n} \nu_B^{\ast k}(x-a(x)),
$$
where $\nu_B^{\ast k}$ is the $k$-fold self-convolution of $\nu_B$. In \cite{JSS21}, the distribution $\nu_B^{\ast k}$ is approximated by $\nu^{\ast k}$ at the cost of a multiplicative factor $(2^n/|B|)^k$ and it gives
$$
|A|\le \left(\frac{2^n}{|B|}\right)^k\sum_{x\in \{0, 1, \cdots, k+1\}^n} \nu^{\ast k}(x-a(x)).
$$
Note $\nu^{\ast k}$ is exactly the product of Binomial distributions $\text{Bin}(k, 1/2)^{\otimes n}$. Thus the remaining difficulty is to bound the ratio between $\nu^{\ast k}(x)$ and its translate $\nu^{\ast k}(x-a(x))$. The multiplicative factor $(2^n/|B|)^k$ is not sharp if $B$ has certain expansion property, and it can likely be improved to be sub-exponential in $k$. 
\end{enumerate}

The key arguments in our proofs are summarized below.
\begin{enumerate}
\item \textbf{Theorem \ref{thm:non-zero-order}}. Let $X$ and $Y$ be random vectors, with $X$ uniform on $\{0, 1\}^n$ and $Y$ uniform on the largest fiber $B$. We couple $X$ and $Y$ by setting $Z=X\oplus Y$, where $\oplus$ denotes coordinatewise addition modulo two. This coupling has two useful properties: $Z$ is uniform on $\{0, 1\}^n$, independent of $Y$; and, conditioning on $Z$, the weighted sum $S_w$ can be expressed in terms of $Y$. Moreover, the conditional distribution of $S_w$ given the value of $Z$ factors into a product involving two independent partial sums. Applying the Cauchy–Schwarz inequality to this product doubles the order of the Rényi entropy. Repeating this dyadic induction reaches the comparatively easy base case $\alpha=1/2$.

\item \textbf{Theorem \ref{thm:zero-order}}. This result combines bounds from Theorems \ref{thm:zero-order-a} and \ref{thm:zero-order-b}. For a weight vector $w=(w_1, \cdots, w_n)$, the key idea is to choose a maximal dissociated set of weights $\{w_i, i\in J\}$, whose subset sums are all distinct. By maximality, every remaining weight can be expressed as a signed combination of these core weights. Consequently, the entire weighted sum is determined by integer coefficients attached to the core weights, each taking at most $m+1$ possible values, where $m$ is the number of nonzero weights of $w$. This bounds the number of possible values of $S_w$. Meanwhile, this maximal dissociated set alone produces $2^{|J|}$ distinct, equally likely sums. These observations yield Theorem \ref{thm:zero-order-b} as follows
$$
H_0(S_w)\leq \log_2(m+1) H_\infty(S_w).
$$
The simple observation that every nonzero probability of $S_w$ is at least $2^{-m}$ gives the interpolation inequality \eqref{eq:H0-interpolation} as follows
$$
H_0(S_w)\leq(1-\alpha)H_\alpha(S_w)+\alpha m\log 2.
$$
Then we select $\alpha$ according to the size of $H_\infty(S_w)$. Invoking Theorem \ref{thm:non-zero-order}, we can control $H_\alpha(S_w)$ by $H_\infty(S_w)$. With this choice of $\alpha$, this gives
$$
H_0(S_w)\leq \left(2+\log_2\left(\frac{m\log 2}{H_\infty(S_w)}\right)\right) H_\infty(S_w).
$$
If $H_\infty(S_w)\ge (m\log 2)/2$, the elementary bound $H_0(S_w)\leq m\log 2$ suffices. This proves Theorem \ref{thm:zero-order-a}.
\end{enumerate}

\section{Proof of Theorem \ref{thm:non-zero-order}}

Let $X$ be a random vector uniform on $\{0, 1\}^n$. Recall $S_w=\sum_{i=1}^nw_iX_i$ and set
\begin{equation}\label{eq:f_w-rho_w}
f_w(t)=\P(S_w=t), \quad \rho(w)=\max_{t\in \R} f_w(t).
\end{equation}
Choose $\tau\in\R$ such that $f_w(\tau)=\rho(w)$ and define the fiber
\begin{equation}\label{eq:largest fiber}
B=\left\{x\in \{0, 1\}^n: \langle w, x\rangle=\tau\right\}.
\end{equation}
Clearly, $|B|=2^n\rho(w)$. Let $Y$ be a random vector uniform on $B$, independent of $X$, and 
$$
Z=X\oplus Y,
$$
where $\oplus$ denotes coordinatewise addition modulo two.  For $z\in\{0, 1\}^n$, we define
\begin{equation}\label{eq:cond-pmf S_w}
f_w(t|z)=\P(S_w=t|Z=z).
\end{equation}
Whenever $f_w(t)>0$, we define the normalized likelihood function
\begin{equation}\label{eq:likelihood}
\phi(t|z)=\frac{f_w(t|z)}{f_w(t)}.
\end{equation}

\begin{lem} [Largest fiber coupling]\label{lem:largest fiber coupling}
The following properties hold.
\begin{enumerate}
\item The random vector $Z$ is uniformly distributed on $\{0, 1\}^n$ and independent of $Y$.
\item We have
$$
0\le \phi(t|z)\le \frac{1}{\rho(w)}, \qquad 2^{-n}\sum_{z\in\{0, 1\}^n}\phi(t|z)=1.
$$
\item Given $z\in\{0, 1\}^n$, we let $I=\{i\in [n]: z_i=0\}$ and set
$$
U_z=\sum_{i\in I}w_iX_i, \qquad V_z=\sum_{i\notin I}w_iX_i, \qquad c_z=\sum_{i\notin I}w_i.
$$
Then $U_z, V_z$ are independent, $U_z+V_z=S_w$, and $f_w(t|z)$ is the pushforward of the conditional distribution of $U_z$ given $U_z+V_z=\tau$ under the map $u\mapsto c_z-\tau+2u$.
\end{enumerate}
\end{lem}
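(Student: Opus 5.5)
The plan is to verify the three properties directly from the definitions, using only that $X$ is uniform on $\{0,1\}^n$ and independent of $Y$.

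\textbf{Property (1).} Condition on $Y=y$ for a fixed $y\in B$. The map $x\mapsto x\oplus y$ is a bijection of $\{0,1\}^n$, so $X\oplus y$ is uniform for every $y$. Hence the conditional law of $Z$ given $Y=y$ is uniform and does not depend on $y$. This gives both that $Z$ is uniform and that $Z$ is independent of $Y$.

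\textbf{Property (3).} I would do this before (2), since (2) follows from it. Fix $z$ and $I=\{i:z_i=0\}$. Since $Z=X\oplus Y$, we have $X=Y\oplus z$, so $X_i=Y_i$ for $i\in I$ and $X_i=1-Y_i$ for $i\notin I$. Therefore
$$S_w=\sum_{i\in I}w_iY_i+c_z-\sum_{i\notin I}w_iY_i.$$
On $B$ we have $\sum_{i\notin I}w_iY_i=\tau-\sum_{i\in I}w_iY_i$, which gives
$$S_w=c_z-\tau+2\sum_{i\in I}w_iY_i.$$
By (1), conditioning on $Z=z$ leaves $Y$ uniform on $B$. So the conditional law of $S_w$ given $Z=z$ is the pushforward, under $u\mapsto c_z-\tau+2u$, of the law of $\sum_{i\in I}w_iY_i$ with $Y$ uniform on $B$.

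It remains to identify this law. Take $X$ uniform on $\{0,1\}^n$ and condition on $\langle w,X\rangle=\tau$; this makes $X$ uniform on $B$. So the law of $\sum_{i\in I}w_iY_i$ equals the conditional law of $U_z$ given $U_z+V_z=\tau$. The variables $U_z$ and $V_z$ are independent because they depend on disjoint coordinates of $X$, and clearly $U_z+V_z=S_w$.

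\textbf{Property (2).} Nonnegativity of $\phi$ is immediate. For the averaging identity, use (1): $\P(Z=z)=2^{-n}$, so
$$2^{-n}\sum_z f_w(t|z)=f_w(t),$$
and dividing by $f_w(t)$ gives $2^{-n}\sum_z\phi(t|z)=1$.

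The upper bound $\phi(t|z)\le 1/\rho(w)$ is the only step needing a small computation, and I expect it to be the main point of care. By (3), $f_w(t|z)=\P(U_z=u\mid S_w=\tau)$ with $u=(t-c_z+\tau)/2$. By independence, this equals
$$\frac{\P(U_z=u)\,\P(V_z=\tau-u)}{\rho(w)}.$$
One has to be careful that this is the right joint event. The corresponding event in the original variables is $\{U_z=u, V_z=\tau-u\}$, and the unconditional probability of the target event involves the reflected value of $V_z$. Here I will use the symmetry $X_i\mapsto 1-X_i$ for $i\notin I$, which sends $V_z$ to $c_z-V_z$ and preserves the law. This gives $\P(V_z=\tau-u)=\P(V_z=c_z-\tau+u)$. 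Since $t=u+(c_z-\tau+u)$, independence then yields
$$\P(U_z=u)\,\P(V_z=c_z-\tau+u)=\P(U_z=u,\ V_z=t-u)\le \P(S_w=t)=f_w(t).$$
Hence $f_w(t|z)\le f_w(t)/\rho(w)$, i.e. $\phi(t|z)\le 1/\rho(w)$.
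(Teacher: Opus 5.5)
Your proof is correct. Parts (1) and (3), the nonnegativity of $\phi$, and the averaging identity follow the paper's argument essentially verbatim.

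The only genuine difference is in the upper bound $\phi(t|z)\le 1/\rho(w)$. The paper proves it directly, without using (3). Writing $Q=\{x:\langle w,x\rangle=t\}$, one has $\P(X=x\mid Z=z)=\P(Y=x\oplus z)\le |B|^{-1}$ for each $x\in Q$. Hence $f_w(t|z)\le |Q|/|B|$, while $f_w(t)=|Q|2^{-n}$, and the ratio is $2^n/|B|=1/\rho(w)$.

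You instead go through the factorization in (3) together with the reflection $V_z\overset{d}{=}c_z-V_z$. This gives the exact identity $f_w(t|z)=\P(U_z=u,\,S_w=t)/\rho(w)$ with $u=(t-c_z+\tau)/2$, and you then drop the constraint on $U_z$.

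Your route is more informative: it shows $\rho(w)\phi(t|z)=\P(U_z=u\mid S_w=t)$, i.e.\ it identifies exactly which part of the fiber $Q$ contributes, namely $Q\cap(B\oplus z)$. The cost is that it needs the linearity of $S_w$ and the symmetry step you flagged as delicate. The paper's counting bound is two lines and holds verbatim for any function of $X$ in place of $\langle w,X\rangle$, since it uses only that $Y$ is uniform on a largest fiber.
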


\begin{proof}
(1) For all $z\in \{0, 1\}^n$ and $y\in B$, we have
$$
\P(Z=z, Y=y)=\P(X=z\oplus y)\cdot\P(Y=y)= 2^{-n}\cdot |B|^{-1}.
$$
Summing over $y$ proves uniformity of $Z$ and then the independence between $Y$ and $Z$.

(2) Write $Q=\{x\in \{0, 1\}^n: \langle w, x\rangle=t\}$. Clearly, $\{S_w=t\}=\{X\in Q\}$. Recall definitions in \eqref{eq:likelihood}, \eqref{eq:cond-pmf S_w} and \eqref{eq:f_w-rho_w}. We have
\begin{align*}
\phi(t|z) 
&=\frac{\sum_{x\in Q}\P(X=x|Z=z)}{\P(S_w=t)}\\
&=\frac{\sum_{x\in Q}\P(Y=x\oplus z)}{|Q|\cdot 2^{-n}}\\
&\le \frac{|Q|\cdot |B|^{-1}}{|Q|\cdot 2^{-n}}\\
&=\frac{1}{\rho(w)}.
\end{align*}

The other identity follows from the total law of probability 
$$
\P(S_w=t)=2^{-n}\sum_{z\in\{0, 1\}^n}\P(S_w=t|Z=z).
$$

(3) The vector $Y$ is independent of $Z$ and it remains uniform on $B$ given $Z=z$. Then
\begin{align*}
S_w &=\sum_{i=1}^nw_i(Y_i\oplus z_i)\\
&=\sum_{i\in I}w_iY_i+\sum_{i\notin I}w_i(1-Y_i)\\
&=\sum_{i\notin I}w_i-\tau+2\sum_{i\in I}w_iY_i.
\end{align*}
The uniformity of $Y$ on $B$ is exactly the conditional distribution of $X$ given $S_w=\tau$. Therefore, we have
$$
S_w\overset{d}{=}c_z-\tau+2U_z.
$$
This proves the pushforward statement.
\end{proof}

\begin{proof} [Proof of Theorem \ref{thm:non-zero-order}]
Let $0<\gamma<1$. Recall $\phi(t|z)$ defined in \eqref{eq:likelihood} and statement (2) of Lemma \ref{lem:largest fiber coupling}. If $\phi(t|z)>0$, we have
\begin{align*}
f_w(t|z)^\gamma 
&=f_w(t)^\gamma\phi(t|z)\phi(t|z)^{\gamma-1}\\
&\ge f_w(t)^\gamma\phi(t|z)\rho(w)^{1-\gamma}.
\end{align*}
This inequality remains valid if $\phi(t|z)=0$. Then we take a uniform average over $z$, use the identity $2^{-n}\sum_z\phi(t|z)=1$ in Lemma \ref{lem:largest fiber coupling}, and sum over $t$ to obtain
\begin{equation}\label{eq:moment-f}
\sum_tf_w(t)^\gamma\le \rho(w)^{\gamma-1}\cdot 2^{-n}\sum_{z} \sum_tf_w(t|z)^\gamma.
\end{equation}
For any given $z\in\{0, 1\}^n$, we know from statement (3) of Lemma \ref{lem:largest fiber coupling} that $f_w(t|z)$ and  the conditional distribution of $U_z$ given $S_w=U_z+V_z=\tau$ have the same $\gamma$-th moment. For any $u\in\R$, we have
$$
\P(U_z=u|S_w=\tau)=\frac{\P(U_z=u)\cdot \P(V_z=\tau-u)}{\rho(w)}.
$$
Therefore, we have
\begin{align}\label{eq:moment-f_z}
\sum_tf_w(t|z)^\gamma &=\rho(w)^{-\gamma}\sum_u \P(U_z=u)^\gamma \cdot\P(V_z=\tau-u)^\gamma \notag\\
&\le \rho(w)^{-\gamma}\left[\sum_u \P(U_z=u)^{2\gamma}\right]^{1/2} \left[\sum_{v}\P(V_z=v)^{2\gamma}\right]^{1/2}.
\end{align}
For $\gamma=1/2$, we combine \eqref{eq:moment-f} and \eqref{eq:moment-f_z} to obtain
$$
\sum_tf_w(t)^{1/2}\le \rho(w)^{-1}\quad \text{i.e.,}\quad H_{1/2}(S_w)\le 2H_\infty (S_w).
$$
Then the monotonicity of R\'enyi entropy gives 
\begin{equation}\label{eq:H_1/2}
H_{\gamma}(S_w)\le 2H_\infty (S_w), \quad \gamma\ge 1/2.
\end{equation}
This, together with \eqref{eq:multiplicative-compr}, proves the $\alpha\ge 1/2$ case of Theorem \ref{thm:non-zero-order}.

Let $D_\gamma$ denote the smallest possible dimension-free constant in Theorem \ref{thm:non-zero-order}. If the statement has been proved at order $2\gamma$ in all dimensions, 
then we can apply this hypothesis to the partial sums $U_z$ and $V_z$ and obtain
$$
\sum_u \P(U_z=u)^{2\gamma}\le e^{(1-2\gamma)D_{2\gamma}H_\infty(U_z)} \le e^{(1-2\gamma)D_{2\gamma}H_\infty(S_w)}.
$$
A similar estimate holds for $V_z$. Combined with \eqref{eq:moment-f_z} and \eqref{eq:moment-f}, we obtain
$$
\sum_tf_w(t)^\gamma\le e^{[1+(1-2\gamma)D_{2\gamma}]H_\infty(S_w)},
$$
that is
$$
H_\gamma(S_w)\le \frac{1+(1-2\gamma)D_{2\gamma}}{1-\gamma} \cdot H_\infty(S_w).
$$
Therefore, we have
$$
D_\gamma\le \frac{1+(1-2\gamma)D_{2\gamma}}{1-\gamma}.
$$
Set $\tilde{D}_\gamma=(1-\gamma)D_\gamma$ and obtain $\tilde{D}_\gamma\le 1+\tilde{D}_{2\gamma}$.
For a prescribed $\alpha\in (0, 1/2)$, we repeat this doubling argument $k_\alpha=\left\lceil \log_2\left( 1/2\alpha\right)\right\rceil$ times so that $2^{k_\alpha}\alpha\in[1/2, 1)$. Then \eqref{eq:H_1/2} gives
$$
\tilde{D}_{2^{k_\alpha}\alpha}=(1-2^{k_\alpha}\alpha)D_{2^{k_\alpha}\alpha}\le 2(1-2^{k_\alpha}\alpha).
$$
After $k_\alpha$ iterations of  $\tilde{D}_\gamma\le 1+\tilde{D}_{2\gamma}$, we obtain
$$
\tilde{D}_\alpha\le k_\alpha+\tilde{D}_{2^{k_\alpha}\alpha}\le k_\alpha+2(1-2^{k_\alpha}\alpha),
$$
which gives
$$
D_\alpha\le \frac{k_\alpha+2(1-2^{k_\alpha}\alpha)}{1-\alpha}.
$$
This proves the $0<\alpha< 1/2$ case of Theorem \ref{thm:non-zero-order}.
\end{proof}

\begin{rmk}
The proof crucially relies on the conditional factorization of the distribution of $S_w$ given the value of $Z$ as a product involving two independent partial sums. This factorization fails when $X$ is not uniform on $\{0, 1\}^n$, and therefore, the approach does not directly apply to biased Bernoulli random variables. Finding the right biased analogue of Lemma \ref{lem:largest fiber coupling} is a natural problem.
\end{rmk}


\section{Proof of Theorem \ref{thm:zero-order}}

Theorem \ref{thm:zero-order} combines bounds from Theorems \ref{thm:zero-order-a} and \ref{thm:zero-order-b}.  

\begin{thm}\label{thm:zero-order-a}
For $w\in\mathbb{R}^n$, denote $m=|\{i\in [n]: w_i\ne 0\}|$. Under notations of Conjecture \ref{conj:main}, we have
\begin{equation}
 H_0(S_w)
 \leq\left(2+\log_2\left(\frac{m\log 2}{H_\infty(S_w)}\right)\right) H_\infty(S_w).
 \label{eq:log-loss}
\end{equation}
\end{thm}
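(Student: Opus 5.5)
We combine an interpolation inequality with Theorem \ref{thm:non-zero-order}, as outlined in the introduction. Write $h=H_\infty(S_w)$. Removing coordinates with $w_i=0$ changes neither the distribution of $S_w$ nor $m$, so we may assume $n=m$. If $h\ge (m\log 2)/2$, then $\log_2(m\log 2/h)\ge 0$, so the coefficient in \eqref{eq:log-loss} is at least $2$ and the right-hand side is at least $2h\ge m\log 2\ge H_0(S_w)$. The trivial bound $|\mathcal{R}(w)|\le 2^m$ therefore finishes this case.

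Assume now $h<(m\log 2)/2$. The first step is the interpolation inequality
\begin{equation}\label{eq:H0-interpolation}
H_0(S_w)\le (1-\alpha)H_\alpha(S_w)+\alpha m\log 2,\qquad \alpha\in(0,1).
\end{equation}
Every atom of $S_w$ has mass at least $2^{-m}$, since it is a nonempty union of atoms of the uniform measure on $\{0,1\}^m$. Hence $f_w(t)^\alpha\ge 2^{-m\alpha}$ for each $t$ in the support, and so
$$
\sum_t f_w(t)^\alpha\ge |\supp f_w|\,2^{-m\alpha}.
$$
Taking logarithms gives \eqref{eq:H0-interpolation}.

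Next we choose dyadic $\alpha=2^{-k}$ with $k\ge 1$. Then $k_\alpha=k-1$ and $2^{k_\alpha}\alpha=1/2$, so Theorem \ref{thm:non-zero-order} gives $(1-\alpha)C_\alpha=k-1+2\cdot\tfrac12=k$. (The case $k=1$, $\alpha=1/2$, gives $C_{1/2}=2$, consistent with $(1-\alpha)C_\alpha=1$.) Thus $(1-\alpha)H_\alpha(S_w)\le k h$, and
$$
H_0(S_w)\le kh+2^{-k}m\log 2 .
$$
We take $k$ to be the smallest integer with $2^{-k}m\log 2\le h$, namely $k=\lceil \log_2(m\log 2/h)\rceil$, which is at least $1$ in this case. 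Since $k\le 1+\log_2(m\log 2/h)$,
$$
H_0(S_w)\le \bigl(1+\log_2(m\log 2/h)\bigr)h+h,
$$
which is exactly \eqref{eq:log-loss}.

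The main point to check is the constant bookkeeping. The dyadic choice of $\alpha$ makes $(1-\alpha)C_\alpha$ exactly an integer and avoids the awkward term $2(1-2^{k_\alpha}\alpha)$. Rounding $k$ up costs at most one extra $h$, and the remaining $2^{-k}m\log 2\le h$ costs another. Together these account for the constant $2$. The degenerate case $h=0$ means $S_w$ is constant, which happens only when $m=0$; then both sides vanish, with the usual convention for the logarithm.
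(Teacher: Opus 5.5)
Your proof is correct and follows the paper's argument. It uses the same trivial case $H_\infty(S_w)\ge (m\log 2)/2$, the same interpolation inequality from the $2^{-m}$ lower bound on atoms, and Theorem \ref{thm:non-zero-order} to control $(1-\alpha)H_\alpha(S_w)$. The only difference is bookkeeping. The paper takes $\alpha=H_\infty(S_w)/(m\log 2)$ exactly and uses $2(1-2^{k_\alpha}\alpha)\le 1$, while you take the dyadic $\alpha=2^{-k}$ so that $(1-\alpha)C_\alpha=k$. Both reach $(k+1)H_\infty(S_w)$ with $k=\lceil\log_2(m\log 2/H_\infty(S_w))\rceil$.
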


\begin{proof}
If $w$ is the zero vector, there is nothing to prove. Therefore, we assume that $w$ is nonzero. If $H_\infty(S_w)\ge(m\log 2)/2$, then the trivial bound $H_0(S_w)\leq m\log 2$ gives
$$
H_0(S_w)\le 2H_\infty(S_w).
$$
If $H_\infty(S_w)<(m\log 2)/2$, we set $\alpha=H_\infty(S_w)/m\log 2<1/2$. Note every non-zero probability satisfies $f_w(t)\ge 2^{-m}$. Hence, we have
$$
\sum_tf_w(t)^\alpha\geq |\supp(f_w)|\cdot 2^{-m\alpha},
$$
which yields
\begin{equation}\label{eq:H0-interpolation}
H_0(S_w)\leq(1-\alpha)H_\alpha(S_w)+\alpha m\log 2.
\end{equation}
Since $\alpha<1/2$, Theorem \ref{thm:non-zero-order} gives
\begin{align*}
(1-\alpha)H_\alpha(S_w) &\leq [k_\alpha+2(1-2^{k_\alpha}\alpha)]H_\infty(S_w)\\
&\leq(k_\alpha+1)H_\infty(S_w),
\end{align*}
where the second inequality follows from the definition of $k_\alpha=\lceil \log_2 (1/2\alpha)\rceil$ so that $2^{k_\alpha}\alpha\in[1/2, 1)$. Our selection of $\alpha$ satisfies $\alpha m\log 2=H_\infty(S_w)$. By \eqref{eq:H0-interpolation}, we have
$$
H_0(S_w)\leq(k_\alpha+2)H_\infty(S_w).
$$
Using the definitions of $\alpha=H_\infty(S_w)/m\log 2$, one can check that
$$
k_\alpha=\left\lceil\log_2\left(\frac1{2\alpha}\right)\right\rceil\leq\log_2\left(\frac{m\log 2}{H_\infty(S_w)}\right).
$$
This completes the proof of inequality \eqref{eq:log-loss}. 
\end{proof}


\begin{defn}\label{def:dissociativity}
Let $\Lambda\subseteq\R$ be a finite multiset. A subset $\Lambda'=\{\lambda_i: i=1, \cdots, |\Lambda'|\}\subseteq\Lambda$  is called \textit{dissociated} if the equation
$$
\sum_{i=1}^{|\Lambda'|}\varepsilon_i\lambda_i=0, \quad \varepsilon_j\in \{-1, 0, 1\}
$$
has only the trivial solution $\varepsilon_i=0$ for $1\le i\le |\Lambda'|$. In other words, the subset sums $\{S_{\Lambda''}: \Lambda''\subseteq\Lambda'\}$ are all distinct, where
$$
S_{\Lambda''}=\sum_{\lambda\in\Lambda''} \lambda.
$$ 
We call $\Lambda'\subseteq \Lambda$ a \textit{maximal dissociated subset} if $\Lambda'\cup \{\lambda\}$ is not dissociated for any $\lambda\in \Lambda\setminus\Lambda'$.
\end{defn} 

\begin{rmk}
The above definition of dissociated sets is slightly different from that is commonly used in additive combinatorics and harmonic analysis, where dissociativity is defined for sets (in general abelian groups) instead of multisets (for example, see \cite{Gre04} and Definition 4.32 in \cite{TV10}). 
\end{rmk}

\begin{lem}\label{lem:dissociated rep}
Let $\Lambda'=\{\lambda_i: i=1, \cdots, |\Lambda'|\}\subseteq \Lambda$ be a maximal dissociated subset. Then every $\lambda\in \Lambda$ has the representation
$$
\lambda=\sum_{i=1}^{|\Lambda'|}\varepsilon_i\lambda_i, \quad \varepsilon_i\in \{-1, 0, 1\}.
$$
\end{lem}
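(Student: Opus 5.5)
We split into two cases, according to whether $\lambda$ belongs to $\Lambda'$ or to $\Lambda\setminus\Lambda'$. Throughout we treat $\Lambda$ as a multiset: elements of $\Lambda'$ are indexed copies, so a repeated value counts as a distinct element.

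\emph{Case $\lambda\in\Lambda'$.} Then $\lambda=\lambda_j$ for some index $j$. We take $\varepsilon_j=1$ and $\varepsilon_i=0$ for $i\ne j$, which gives the required representation.

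\emph{Case $\lambda\in\Lambda\setminus\Lambda'$.} By maximality, the multiset $\Lambda'\cup\{\lambda\}$ is not dissociated. So there exist $\varepsilon_0,\varepsilon_1,\dots,\varepsilon_{|\Lambda'|}\in\{-1,0,1\}$, not all zero, with
$$
\varepsilon_0\lambda+\sum_{i=1}^{|\Lambda'|}\varepsilon_i\lambda_i=0 .
$$
We first show $\varepsilon_0\neq0$. If $\varepsilon_0=0$, then the $\varepsilon_i$ with $i\ge1$ are not all zero and give a nontrivial vanishing signed sum of elements of $\Lambda'$. This contradicts the dissociativity of $\Lambda'$.

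Since $\varepsilon_0\in\{-1,1\}$, we have $\varepsilon_0^{-1}=\varepsilon_0$. Solving for $\lambda$ gives
$$
\lambda=\sum_{i=1}^{|\Lambda'|}(-\varepsilon_0\varepsilon_i)\lambda_i .
$$
Each coefficient $-\varepsilon_0\varepsilon_i$ lies in $\{-1,0,1\}$, so this is the required representation.

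Two edge cases are covered by the same argument. If $\lambda=0$, then $\{\lambda\}$ alone is not dissociated, and the argument yields the empty representation with all $\varepsilon_i=0$. If $\Lambda'$ is empty, maximality forces every $\lambda\in\Lambda$ to be $0$, and again all $\varepsilon_i=0$ works.

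The only step requiring care is ruling out $\varepsilon_0=0$, which is exactly where dissociativity of $\Lambda'$ is used.
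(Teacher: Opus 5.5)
Your proof is correct and follows the paper's argument exactly: the trivial case $\lambda\in\Lambda'$, then maximality to produce a nontrivial vanishing signed sum $\varepsilon_0\lambda+\sum_i\varepsilon_i\lambda_i=0$, the observation $\varepsilon_0\in\{-1,1\}$, and solving for $\lambda$. You also spell out the dissociativity argument for $\varepsilon_0\neq 0$, which the paper simply asserts with ``moreover''.
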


\begin{proof}
The statement clearly holds if $\lambda\in \Lambda'$. For every $\lambda\notin \Lambda'$, by the definition of maximal dissociativity,  $\Lambda'\cup \{\lambda\}$ is not a dissociateed set. Therefore, the equation
$$
\varepsilon \lambda+\sum_{i=1}^{|\Lambda'|}\varepsilon_i\lambda_i=0
$$
has a non-zero solution in $\{-1, 0, 1\}^{|\Lambda'|+1}$, and moreover, we have $\varepsilon\in \{-1, 1\}$. This gives the desired representation. 
\end{proof}

\begin{thm}\label{thm:zero-order-b}
For $w\in\mathbb{R}^n$, denote $m=|\{i\in [n]: w_i\ne 0\}|$. Under notations of Conjecture \ref{conj:main}, we have
\begin{equation}\label{eq:zero-order-a}
H_0(S_w)\le \log_2(m+1)H_\infty(S_w).
\end{equation}
\end{thm}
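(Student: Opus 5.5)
We first discard the zero weights, since they change neither $H_0(S_w)$ nor $H_\infty(S_w)$. Thus we may assume $n=m$ and all $w_i\ne 0$, and we regard $\Lambda=\{w_1,\dots,w_m\}$ as a multiset. We fix a maximal dissociated subset $\Lambda'=\{w_i: i\in J\}$ in the sense of Definition \ref{def:dissociativity}. Such a subset exists and is nonempty: any single nonzero weight is dissociated, and we can enlarge greedily. We write $d=|J|\ge 1$. The proof then rests on two estimates, an upper bound on $H_0(S_w)$ in terms of $d$ and a lower bound on $H_\infty(S_w)$ in terms of $d$.

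\textbf{Upper bound on $H_0$.} By Lemma \ref{lem:dissociated rep}, each $w_j$ can be written as $w_j=\sum_{i\in J}\varepsilon_{ji}w_i$ with $\varepsilon_{ji}\in\{-1,0,1\}$; for $j\in J$ we take $\varepsilon_{ji}=\delta_{ji}$. Hence for $x\in\{0,1\}^m$,
$$
\langle w,x\rangle=\sum_{i\in J}c_i(x)\,w_i,\qquad c_i(x)=\sum_{j=1}^m\varepsilon_{ji}x_j\in\Z.
$$
So $S_w$ is determined by the vector $(c_i(x))_{i\in J}$. Each $c_i(x)$ lies in the interval $[-N_i,P_i]$, where $N_i$ and $P_i$ count the indices $j$ with $\varepsilon_{ji}=-1$ and $\varepsilon_{ji}=+1$ respectively. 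Since $N_i+P_i\le m$, each $c_i$ takes at most $m+1$ values. This gives $|\mathcal{R}(w)|\le (m+1)^d$, that is, $H_0(S_w)\le d\log(m+1)$.

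\textbf{Lower bound on $H_\infty$.} We condition on the coordinates outside $J$. For any fixed values $x_j$, $j\notin J$, the map $(x_i)_{i\in J}\mapsto\sum_{i\in J}w_ix_i$ is injective, because the subset sums of a dissociated set are distinct. Hence every conditional atom of $S_w$ has probability at most $2^{-d}$. Averaging over the remaining coordinates gives $\rho(w)\le 2^{-d}$, so $H_\infty(S_w)\ge d\log 2$.

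\textbf{Conclusion.} Combining the two estimates,
$$
H_0(S_w)\le d\log(m+1)=\log_2(m+1)\cdot d\log 2\le \log_2(m+1)\,H_\infty(S_w).
$$
The step needing the most care is the range count for $c_i$: we must use $N_i+P_i\le m$ to get $m+1$ values rather than $2m+1$. The diagonal convention $\varepsilon_{ji}=\delta_{ji}$ for $j\in J$ keeps this count valid. The conditioning argument for $\rho(w)$ is routine.
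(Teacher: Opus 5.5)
Your proof is correct and follows the paper's argument. You take a maximal dissociated subset $\{w_j\}_{j\in J}$ and use the signed representation from Lemma \ref{lem:dissociated rep} to get at most $(m+1)^{|J|}$ values, then lower-bound $H_\infty(S_w)$ by $|J|\log 2$; your conditioning on the coordinates outside $J$ just spells out the paper's one-line step $H_\infty(S_w)\ge H_\infty\bigl(\sum_{j\in J}w_jX_j\bigr)$. The only thing to add is the trivial case $w=0$ (so $m=0$ and both sides vanish), which your claim that the dissociated subset is nonempty tacitly excludes.
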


\begin{proof}
If $w=0$, there is nothing to prove. Therefore we assume that $w\ne 0$ and that $\{w_1, \cdots, w_m\}$ is the set of all nonzero weights. Let $\{w_j: j\in J\}$ be a maximal dissociated subset of $\{w_1, \cdots, w_m\}$. 
By Lemma \ref{lem:dissociated rep}, for every $i\in I=[m]\backslash J$, we have the representation
$$
w_i=\sum_{j\in J}\varepsilon_{i, j}w_j, \quad \varepsilon_{i, j}\in \{-1, 0, 1\}.
$$
Then we have
\begin{align*}
S_w &=\sum_{j\in J} w_jX_j+\sum_{i\in I}w_iX_i\\
&=\sum_{j\in J} w_jX_j+\sum_{i\in I}\left[\sum_{j\in J}\varepsilon_{i, j}w_j\right]X_i\\
&=\sum_{j\in J} w_jX_j+\sum_{j\in J}w_j\left[\sum_{i\in I}\varepsilon_{i, j}X_i\right]\\
&=\sum_{j\in J} w_j\left[X_j+\sum_{i\in I}\varepsilon_{i, j}X_i\right]\\
&=\sum_{j\in J} w_j\sum_{i\in [m]}\tilde{\varepsilon}_{i, j}X_i.
\end{align*}
Here, $\tilde{\varepsilon}_{i, j}\in \{-1, 0, 1\}$. Since $X_i\in\{0, 1\}$,  for each $j\in J$, the integer-valued random variable $\sum_{i\in [m]}\tilde{\varepsilon}_{i, j}X_i$ can take at most $m+1$ distinct values. This implies that $S_w$ can take at most $(m+1)^{|J|}$ possible values. Therefore, we have
\begin{equation}\label{eq:h_0 upper}
H_0(S_w)\le |J|\log(m+1).
\end{equation}
Since $\{w_j: j\in J\}$ is dissociated,  $\sum_{j\in J} w_jX_j$ takes $2^{|J|}$ distinct values with equal probability. Hence, we obtain
\begin{equation*}
H_\infty(S_w)\ge H_\infty\left(\sum_{j\in J} w_jX_j\right)= |J|\log 2. 
\end{equation*}
This, together with \eqref{eq:h_0 upper}, give inequality \eqref{eq:zero-order-a}.
\end{proof}


The maximal dissociated subset argument used in the proof of Theorem \ref{thm:zero-order-b} does not depend on the arithmetic structure of $\R$, and it remains effective in the context of general abelian groups. Therefore, we have the following extension of Theorem \ref{thm:zero-order-b}.

\begin{thm}\label{thm:zero-order-abelian}
Let $G$ be a general abelian group. Let $\{X_i\}_{1\le i\le n}$ be independent Bernoulli random variables. Write $p_i=\P(X_i=1)$. Suppose $p\le p_i\le 1-p$ for some $0<p\le 1/2$ and for all $1\le i\le n$.  For $g=(g_1, \cdots, g_n)\in G^n$, we define $S_g=\sum_{i=1}^ng_iX_i$. Denote $m=|\{i\in [n]: g_i\ne 0\}|$. Then we have
\begin{equation}\label{eq:extension}
H_0(S_g)\le \frac{\log(m+1)}{-\log (1-p)}H_\infty(S_g).
\end{equation}
\end{thm}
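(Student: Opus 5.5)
We follow the proof of Theorem \ref{thm:zero-order-b}, replacing real-number arithmetic by arithmetic in $G$ and handling the biased probabilities. First discard the indices with $g_i=0$, since they do not affect $S_g$. Write the nonzero elements as the multiset $\{g_1,\dots,g_m\}\subseteq G$, and choose a maximal dissociated subset $\{g_j: j\in J\}$. Here dissociativity means, exactly as in Definition \ref{def:dissociativity}, that $\sum \varepsilon_j g_j=0$ with $\varepsilon_j\in\{-1,0,1\}$ forces $\varepsilon\equiv 0$. The proof of Lemma \ref{lem:dissociated rep} uses only the group axioms, so it applies verbatim. One point needs care: the coefficient $\varepsilon$ in front of the new element $\lambda$ must be nonzero, and this holds because $\Lambda'$ itself is dissociated. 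Hence every $g_i$ with $i\in I=[m]\setminus J$ can be written as $g_i=\sum_{j\in J}\varepsilon_{i,j}g_j$ with $\varepsilon_{i,j}\in\{-1,0,1\}$.

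\textbf{Upper bound on $H_0$.} Substituting this representation gives $S_g=\sum_{j\in J}N_j\, g_j$, where $N_j=X_j+\sum_{i\in I}\varepsilon_{i,j}X_i$ is an integer and $N_j g_j$ means the integer multiple in $G$. Each $N_j$ is a signed sum of distinct Bernoulli variables: it has one term with coefficient $+1$ (namely $X_j$) and at most $m-1$ other terms with coefficients in $\{-1,0,1\}$. Its range is therefore an integer interval of length at most $m$, so $N_j$ takes at most $m+1$ values. The vector $(N_j)_{j\in J}$ determines $S_g$, so $|\supp S_g|\le (m+1)^{|J|}$ and $H_0(S_g)\le |J|\log(m+1)$.

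\textbf{Lower bound on $H_\infty$.} This is where the bias enters. Dissociativity of $\{g_j\}_{j\in J}$ makes the map $x_J\mapsto \sum_{j\in J} g_jx_j$ injective on $\{0,1\}^J$, since the difference of two subset sums is a $\{-1,0,1\}$-combination. Condition on $X_I=x_I$. Then $S_g=c(x_I)+\sum_{j\in J} g_jX_j$, and because the $X_j$ with $j\in J$ are independent of $X_I$,
\[
\P(S_g=s\mid X_I=x_I)\le \max_{x_J}\prod_{j\in J}\P(X_j=x_j)\le (1-p)^{|J|}.
\]
The last step uses $\max(p_j,1-p_j)\le 1-p$. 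Averaging over $x_I$ gives $\|f\|_\infty\le (1-p)^{|J|}$, that is, $H_\infty(S_g)\ge |J|\cdot(-\log(1-p))$.

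\textbf{Conclusion.} Combining the two bounds,
\[
H_0(S_g)\le |J|\log(m+1)\le \frac{\log(m+1)}{-\log(1-p)}\,H_\infty(S_g),
\]
which is \eqref{eq:extension}. If $m=0$, both sides vanish. There is no real obstacle here. The only step needing attention is the conditioning argument for $H_\infty$, since the dissociated part alone no longer yields a uniform distribution; the bound through the maximal atom product handles this.
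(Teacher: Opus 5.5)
Your proof is correct and follows the paper's argument. A maximal dissociated subset $\{g_j\}_{j\in J}$ gives $H_0(S_g)\le |J|\log(m+1)$ through the integer coefficients $N_j$, and injectivity of subset sums over $J$ gives $\max_t \P(S_g=t)\le (1-p)^{|J|}$; your conditioning on $X_I$ just spells out the step $H_\infty(S_g)\ge H_\infty(\sum_{j\in J}g_jX_j)$, which the paper uses without comment.
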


\begin{proof}
Replace $\R$ by $G$, and define dissociativity of a multiset of $G$ as in Definition \ref{def:dissociativity}. One can check that Lemma \ref{lem:dissociated rep} remains valid. Then we can proceed the proof as that of Theorem \ref{thm:zero-order-b} essentially verbatim. Inequality \eqref{eq:h_0 upper} still holds since it does not depend on the probabilities $\{p_i\}_{1\le i\le n}$. The only difference is that the random variable $\sum_{j\in J} g_jX_j$ takes $2^{|J|}$ distinct values and
$$
\max_{t}\P\left(\sum_{j\in J} g_jX_j=t\right)\le \prod_{j\in J}\max\{p_j, 1-p_j\}\le (1-p)^{|J|},
$$
which gives
\begin{equation*}
H_\infty(S_g)\ge H_\infty\left(\sum_{j\in J} g_jX_j\right)\ge-|J|\log (1-p). 
\end{equation*}
This, combined with \eqref{eq:h_0 upper}, gives the extension \eqref{eq:extension}.
\end{proof}


\section{Further directions}

Beyond Conjecture \ref{conj:main}, we believe that the following questions are also worthy of further investigation.

\begin{enumerate}
\item \textbf{Biased Bernoullis.} Theorem \ref{thm:zero-order-b} extends readily to sums of independent, non-identically distributed Bernoulli random variables, even with weights drawn from general abelian groups. It is therefore natural to seek analogues of Theorems \ref{thm:non-zero-order} and \ref{thm:zero-order-a} in this setting. One expects dimension-free bounds, that depend only on lower and upper bounds for the Bernoulli parameters. The main obstacle is to find an
appropriate biased analogue of the largest-fiber coupling in Lemma \ref{lem:largest fiber coupling}.

\item \textbf{$q$-ary random variables.} Let $\{X_i\}_{1\le i\le n}$ be independent random variables, that are uniformly
distributed on $\{0,1,\ldots,q-1\}$. For
$w=(w_1,\ldots,w_n)\in\mathbb{R}^n$, we set
$
S_w=\sum_{i=1}^n w_iX_i.
$
It would be interesting to determine whether the coupling argument developed
in this paper has a $q$-ary analogue and, if so, whether it yields
dimension-free comparisons between the R\'enyi entropies of $S_w$.
\end{enumerate}

{\bf Acknowledgement.} Conjecture \ref{conj:main} was communicated to us by Mokshay Madiman. We thank Zhen Fu, Mokshay Madiman, and Yetong Sha for valuable discussions. This work was supported by the National Natural Science Foundation of China (NSFC) grant no. 62201175.


\bibliographystyle{plain}

\end{document}